\documentclass[11pt,a4paper]{article}
\usepackage[affil-it]{authblk}
\usepackage[utf8]{inputenc}
\usepackage{amsmath, amssymb, mathrsfs, amsthm, amsfonts}
\usepackage{latexsym}
\usepackage[pdftex]{hyperref}
\usepackage{a4wide}
\usepackage{color}
\usepackage{graphicx}

\title{The Varchenko Determinant of a Coxeter Arrangement}

\author{Götz Pfeiffer \thanks{G. Pfeiffer\\ School of Mathematics, Statistics and Applied Mathematics, National University of Ireland Galway, University Road, Galway, Ireland\\ e-mail: \texttt{goetz.pfeiffer@nuigalway.ie}}
, 
Hery Randriamaro \thanks{H. Randriamaro (Corresponding Author)\\ Mathematics Group, International Centre for Theoretical Physics, Strada Costiera 11, 34151 Trieste, Italy \\ e-mail: \texttt{hery.randriamaro@outlook.com} \\ This research was supported through the programme "Oberwolfach Leibniz Fellows" by the Mathematisches Forschungsinstitut Oberwolfach in 2017}}

\newtheorem{theorem}{Theorem}[section]

\newtheorem{lemma}[theorem]{Lemma}
\newtheorem{proposition}[theorem]{Proposition}

\theoremstyle{definition}

\theoremstyle{remark}

\begin{document}

\maketitle

\begin{abstract}
\noindent The Varchenko determinant is the determinant of a matrix defined from an arrangement of hyperplanes. Varchenko proved that this determinant has a beautiful factorization. It is, however, not possible to use this factorization to compute a Varchenko determinant from a certain level of complexity. Precisely at this point, we provide an explicit formula of this determinant for the hyperplane arrangements associated to the finite Coxeter groups. The intersections of hyperplanes with the chambers of such arrangements have nice properties which play a central role for the calculation of their relating determinants.

\bigskip 

\noindent \textsl{Keywords}: Varchenko Determinant, Coxeter Group, Hyperplane Arrangement 

\smallskip

\noindent \textsl{MSC Number}: 05E15, 20C05 
\end{abstract}

\section{Introduction}

\noindent \noindent Let $x = (x_1, \dots, x_n)$ be a variable of the Euclidean space $\mathbb{R}^n$, and $a_1, \dots, a_n, b$ real coefficients such that $(a_1, \dots, a_n) \neq (0, \dots, 0)$. A hyperplane $H$ of $\mathbb{R}^n$ is a $(n-1)$--dimensional affine subspace $H := \{ x \in \mathbb{R}^n \ |\ a_1 x_1 + \dots + a_n x_n = b \}$. An arrangement of hyperplanes in $\mathbb{R}^n$ is a finite set of hyperplanes. For example, the most known hyperplane arrangement is certainly $\mathcal{A}_{A_{n-1}} = \big\{\{x \in \mathbb{R}^n \ |\ x_i-x_j=0\}\big\}_{1 \leq i < j \leq n}$ associated to the Coxeter group $A_{n-1}$.

\smallskip

\noindent A \emph{chamber} of a hyperplane arrangement $\mathcal{A}$ is a connected component of the complement $\mathbb{R}^n \setminus \bigcup_{H \in \mathcal{A}} H$. Denote the set of all chambers of $\mathcal{A}$ by $\mathfrak{C}(\mathcal{A})$.\\
Assign a variable $a_H$ to each hyperplane $H$ of an arrangement $\mathcal{A}$. Let $R_{\mathcal{A}} = \mathbb{Z}[a_H \,|\, H \in \mathcal{A}]$ be the ring of polynomials in variables $a_H$. The module of $R_{\mathcal{A}}$-linear combinations of chambers of the hyperplane arrangement $\mathcal{A}$ is
$$M_{\mathcal{A}} := \{ \sum_{C \in \mathfrak{C}(\mathcal{A})} x_C C\ |\ x_C \in R_{\mathcal{A}}\}.$$

\noindent Let $\mathcal{H}(C,D)$ be the set of hyperplanes separating the chambers $C$ and $D$ in $\mathfrak{C}(\mathcal{A})$. The $R_{\mathcal{A}}$-bilinear symmetric form $\mathsf{B}: M_{\mathcal{A}} \times M_{\mathcal{A}} \rightarrow R_{\mathcal{A}}$ on the hyperplane arrangement $\mathcal{A}$ defined by Varchenko \cite{Va} is $$\mathsf{B}(C,C):=1,\ \text{and}\ \mathsf{B}(C,D):= \prod_{H \in \mathcal{H}(C,D)} a_H\ \text{if}\ C \neq D.$$

\noindent The Varchenko matrix of the hyperplane arrangement $\mathcal{A}$ is the matrix $\big(\mathsf{B}(C,D)\big)_{C,D \in \mathfrak{C}(\mathcal{A})}$ associated to the bilinear symmetric form $\mathsf{B}$. In terms of Markov chains, it is the matrix of random walks on $\mathfrak{C}(\mathcal{A})$ whose walk probability from the chamber $C$ to the chamber $D$ is equal to $\mathsf{B}(C,D)$. The \emph{Varchenko determinant} of the hyperplane arrangement $\mathcal{A}$ is the determinant
$$\det \mathcal{A} := \det \big(\mathsf{B}(C,D)\big)_{C,D \in \mathfrak{C}(\mathcal{A})}.$$

\noindent One of the first appearances of this bilinear form was in the work of Schechtman and Varchenko \cite[1. Quantum groups]{ScVa}, in the implicit form of a symmetric bilinear form on a Verma module over a $\mathbb{C}$-algebra. It appeared more explicitly later, one year before its publication, however as a very special case, when Zagier studied a Hilbert space $\mathbb{H}$ together with a nonzero distinguished vector $|0\rangle$, and a collection of operators $a_k:\mathbb{H} \rightarrow \mathbb{H}$ satisfying the commutation relations $a(l)a^{\dag}(k) - qa^{\dag}(k)a(l) = \delta_{k,l}$, and the relation $a(k)|0\rangle = 0$. To demonstrate the realizability of its model, he defined an inner product space $\big(\mathbb{H}(q),\langle\cdot,\cdot\rangle\big)$ with basis $B$ consisting of $n$-particle states $a^{\dag}(k_1) \dots a^{\dag}(k_n)|0\rangle$, and proved that \cite[Theorem 1, 2]{Za} $$\det \big(\langle u,v \rangle\big)_{u,v \in B} = \prod_{k=1}^{n-1} (1- q^{k^2+k})^{\frac{n!(n-k)}{k^2+k}}.$$
It is the Varchenko determinant of $\mathcal{A}_{A_{n-1}}$ with all hyperplanes weighted by $q$. Using the diagonal solutions of the Yang-Baxter equation, Duchamp et al. computed \cite[6.4.2 A Decomposition of $B_n$]{DuEtAl} $$\det \mathcal{A}_{A_{n-1}} = \prod_{\substack{I \in 2^{[n]} \\ |I| \geq 2}} \Big(1- \prod_{\{i,j\} \in \binom{I}{2}} a_{H_{i,j}}^2\Big)^{(|I|-2)!\,(n-|I|+1)!}.$$ each hyperplane $\{x \in \mathbb{R}^n \ |\ x_i-x_j=0\}\big\}$ having its own weight $a_{H_{i,j}}$ this time.

\smallskip

\noindent An \emph{edge} of a hyperplane arrangement $\mathcal{A}$ is a nonempty intersection of some of its hyperplanes. Denote the set of all edges of $\mathcal{A}$ by $L(\mathcal{A})$. The \emph{weight} $\mathsf{a}(E)$ of an edge $E$ is
$$\mathsf{a}(E) := \prod_{\substack{H \in \mathcal{A} \\ E \subseteq H}} a_H.$$
The \textit{multiplicity} $l(E)$ of an edge $E$ is a positive integer computed as follows \cite[2. The Nullspace of the $B$ Matrices]{DeHa}: First choose a hyperplane $H$ of $\mathcal{A}$ containing $E$. Then $l(E)$ is half the number of chambers $C$ of $\mathfrak{C}(\mathcal{A})$ which have the property that $E$ is the minimal intersection containing $\overline{C} \cap H$.\\
\noindent The formula of the determinant of a hyperplane arrangement $\mathcal{A}$ \cite[(1.1) Theorem]{Va} due to Varchenko is  $$\det \mathcal{A} = \prod_{E \in L(\mathcal{A})} \big(1- \mathsf{a}(E)^2\big)^{l(E)}.$$

\noindent From this formula, we see that we can get a more explicit or computable value of the determinant of an arrangement if we have computable forms of the $\mathsf{a}(E)$'s and the $l(E)$'s. In this article, we prove that it is the case for the arrangements associated to finite Coxeter groups. Recall that a reflection in $\mathbb{R}^n$ is a linear map sending a nonzero vector $\alpha$ to its negative while fixing pointwise the hyperplane $H$ orthogonal to $\alpha$. The finite reflection groups are also called finite Coxeter groups, since they have been classified by Coxeter \cite{Co}. Coxeter groups find applications in pratically all mathematics areas. They are particularly studied in great depth in algebra \cite{GePf}, combinatorics \cite{BjBr}, and geometry \cite{AbBr}. And they are the foundation ingredients of mathematical theories like the descent algebras, the Hecke algebras, or the Kazhdan–Lusztig polynomials. A finite Coxeter group $W$ has the presentation
$$W := \big\langle s_1, s_2, \dots, s_n \ |\ (s_i s_j)^{m_{ij}}=1 \big\rangle \quad \text{with} \quad m_{ii} = 1,\, m_{ij} \geq 2, \, m_{ij} = m_{ji}.$$
We begin with some definitions relating to the finite Coxeter group $W$.\\
The elements of the set $S := \{s_1, s_2, \dots, s_n\}$ are called the simple reflections of $W$. The set of all reflections of $W$ is denoted by $T := \{s_i^x \ |\ s_i \in S,\, x \in W\}$.\\
As usually, for a subset $J$ of $S$, $W_J$ is the parabolic subgroup $\langle J \rangle$ of $W$, $T_J$ the set of reflections $T \cap W_J$ of $W_J$, $X_J$ the set of coset representatives of minimal length of $W_J$, and $[J]$ its Coxeter class that is the set of $W$-conjugates of $J$ which happen to be subsets of $S$.

\smallskip

\noindent One says that $J$ is irreducible if the relating parabolic subgroup $W_J$ is irreducible.\\
For a subset $K$ of $J$, we write $N_{W_J}(W_K)$ for the normalizer of $W_K$ in $W_J$, and $X(J,K)$ for the set of double coset representatives $$X(J,K) := \{w \in W_J \cap X_K \cap X_K^{-1} \ |\ K^w = K\}.$$
The normalizers of the parabolic subgroups were determined by Howlett \cite{Ho}. He also proved that $N_{W_J}(W_K) = W_K \cdot X(J,K)$ \cite[Corollary 3]{Ho}.

\smallskip

\noindent Let $s_{i_1} \dots s_{i_l}$ be a reduced expression of an element $x$ of $W$. The support of $x$ is $$J(x) := \{s_{i_1}, \dots, s_{i_l}\}$$ which is independent of the choice of the reduced expression \cite[Proposition 2.16]{AbBr}. One says that $x$ has full support if $J(x) = S$.\\  
We finish with the set $\lfloor x \rceil := \{y \in W\ |\ J(y) = J(x)\ \text{and $y$ is conjugate to $x$}\}$.

\smallskip

\noindent Now, we come to the hyperplane arrangement associated to a finite Coxeter group called Coxeter arrangement. Let $H_t$ be the hyperplane $\ker(t-1)$ of $\mathbb{R}^n$ whose points are fixed by each element $t$ of $T$. The hyperplane arrangement associated to the finite Coxeter group $W$ is $\mathcal{A}_W := \{H_t\}_{t \in T}$. In this case, explicit formulas for $\mathsf{a}(E)$ and $l(E)$ can be given, and it is the aim of this article. For a subset $U$ of $T$, we write $E_U$ for the edge $$E_U := \bigcap_{u \in U}H_u.$$ The aim of this article is to prove the following result.

\begin{theorem} \label{Th}
Let $E$ be an edge of $\mathcal{A}_W$. Then $l(E) \neq 0$ if and only if there exist an irreducible subset $J$ of $S$ and an element $w$ of $W$ such that $E = E_{T_J^w}$. In this case, we clearly have $\mathsf{a}(E_{T_J^w}) = \prod_{u \in T_J}a_{H_{u^w}}$. Moreover, let $t_J$ be a reflection with support $J$, $s_J$ a simple reflection and $v$ an element of $W$ such that a reduced expression of $t_J$ is $s_J^v$. Then $$l(E_{T_J^w}) = |\lfloor t_J \rceil| \cdot |[J]| \cdot |X(S,J)| \cdot |X(J,\{s_J\})|.$$ 
\end{theorem}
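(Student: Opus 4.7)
The statement has three independent parts, and I would address them in the order given.

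First, the \emph{characterisation of edges}. By a standard fact on reflection groups (Steinberg), the pointwise stabiliser in $W$ of any subset of $\mathbb{R}^n$ is a parabolic subgroup, hence conjugate to a standard $W_J$; applied to an edge $E$ this gives $E=\mathrm{Fix}(W_J^w)=E_{T_J^w}$ for some $J\subseteq S$, $w\in W$. For the non-vanishing of $l(E)$, it is well known that $l(E)=0$ precisely when the subarrangement of hyperplanes containing $E$ factors as a product of proper subarrangements; at $E_{T_J^w}$ this subarrangement is a copy of $\mathcal{A}_{W_J}$, which fails to factor exactly when $J$ is irreducible. The weight formula is immediate from the same parabolic-stabiliser fact: the hyperplanes through $E_{T_J^w}$ are precisely $\{H_u:u\in T_J^w\}$, giving $\mathsf{a}(E_{T_J^w})=\prod_{u\in T_J}a_{H_{u^w}}$.

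For the \emph{multiplicity}, $W$-equivariance reduces the problem to $l(E_{T_J})$, and I would use the Denham--Hanlon definition with auxiliary hyperplane $H=H_{t_J}$. The crucial geometric input is that for any reflection $t$, $\overline{C_0}\cap H_t$ is precisely the closed face of the fundamental chamber indexed by the support $J(t)$, so $\overline{xC_0}\cap H_{t_J}$ spans the edge $E_{T_{J(x^{-1}t_Jx)}^{x}}$; requiring this to equal $E_{T_J}$ yields the coupled conditions $K:=J(x^{-1}t_Jx)\in[J]$ and $xW_Kx^{-1}=W_J$. I would then count such $x$ in four stages matching the four factors of the formula: stratifying by the type $K\in[J]$ gives $|[J]|$; for fixed $K$ I choose a Coxeter-system isomorphism $y_K$ with $y_KKy_K^{-1}=J$ and, using Howlett's decomposition $N_W(W_K)=W_K\cdot X(S,K)$, write $x=y_K\,k\,r$ with the $X(S,K)\cong X(S,J)$-component free, giving $|X(S,J)|$; the remaining condition $J(k^{-1}t_Jk)=J$ on $k\in W_J$ is handled by orbit--stabiliser, expressing this count as $|C_{W_J}(t_J)|$ times the number of $W_J$-conjugates of $t_J$ of full support $J$; finally, Howlett's theorem for $W_{\{s_J\}}\subseteq W_J$ combined with $t_J\sim_{W_J}s_J$ (which is where the reduced expression $t_J=s_J^v$ with $v\in W_J$ enters) yields $|C_{W_J}(t_J)|=2|X(J,\{s_J\})|$, and the factor of $2$ absorbs the $\tfrac12$ in the definition of $l(E)$.

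The step I expect to be the genuine obstacle is identifying the count of $W_J$-conjugates of $t_J$ with full support $J$ with $|\lfloor t_J\rceil|$ itself, which a priori counts all $W$-conjugates with full support. One inclusion is trivial; for the other, I would first show that any $w\in W$ with $wt_Jw^{-1}\in W_J$ of full support $J$ must lie in $N_W(W_J)$: the intersection of two parabolic subgroups of $W$ is again parabolic, and if $W_J\cap wW_Jw^{-1}$ contains an element of full support $J$ then it must equal $W_J$. Writing such a $w$ as $w=w_0r$ with $w_0\in W_J$ and $r\in X(S,J)$ reduces the remaining question to whether the diagram automorphism $\sigma_r$ of $(W_J,J)$ preserves the $W_J$-conjugacy class of $t_J$, and this follows from the classification of reflections in an irreducible finite Coxeter group --- there are at most two classes, distinguished by root length, and no automorphism of a finite Coxeter diagram can interchange the two root lengths. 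Combining the four factors with this identification yields the formula for $l(E_{T_J})$, and $W$-equivariance transports it to every $E_{T_J^w}$.
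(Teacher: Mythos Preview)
Your strategy coincides with the paper's: both identify $\langle\overline{C}_x\cap H_t\rangle$ with $E_{T_{J(t^{x^{-1}})}^x}$ via the face structure of the Coxeter complex, stratify the count by $K\in[J]$, apply Howlett's $N_W(W_J)=W_J\cdot X(S,J)$, and finish by orbit--stabiliser inside $W_J$ using $C_{W_J}(t_J)=N_{W_J}(W_{\{s_J\}})^v$. You are also right to isolate, as the delicate point, the identification of $|\lfloor t_J\rceil|$ (defined via $W$-conjugacy) with the number of $W_J$-conjugates of $t_J$ of support $J$; the paper's Lemma~\ref{LeWj} passes over this silently.

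Your proposed argument for that identification, however, breaks at its first step. The claim that $wt_Jw^{-1}$ having full support $J$ forces $w\in N_W(W_J)$ is false: in $W=S_4$ with $J=\{s_1,s_2\}$ and $t_J=(1\,3)=s_1s_2s_1$, take $w=(2\,4)$; then $wt_Jw^{-1}=(1\,3)$ still has support $J$, yet $wW_Jw^{-1}=S_{\{1,3,4\}}\neq W_J$. The intersection $W_J\cap wW_Jw^{-1}=\langle(1\,3)\rangle$ is indeed parabolic but not a \emph{standard} parabolic, so containing an element of support $J$ does not force it to equal $W_J$; and since your steps~2 and~3 presuppose $w\in N_W(W_J)$, the argument does not close. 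The conclusion is nevertheless correct, and a direct route avoids normalisers altogether: two reflections of $W$ are conjugate iff their simple representatives lie in the same component of the graph on $S$ with edges $\{s_i,s_j\}$ for $m_{ij}$ odd. Since the Coxeter diagram of a finite irreducible $W$ is a tree, the unique $S$-path between two vertices of a connected subset $J$ already lies in $J$, so odd-connectivity within $J$ agrees with odd-connectivity within $S$; hence $W$-conjugate reflections of $W_J$ are automatically $W_J$-conjugate, and the count goes through.
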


\noindent Let $Y_J$ be the set of the cosets of $N_W(W_J)$, and $\mathcal{I}(S)$ the set of the Coxeter classes $[J]$ of $W$ such that $J$ is irreducible. We deduce that the Varchenko determinant of the arrangement associated to a finite Coxeter group $W$ is
$$\prod_{[J] \in \mathcal{I}(S)} \prod_{w \in Y_J}\Big(1 - \prod_{u \in T_J}a_{H_{u^w}}^2\Big)^{|\lfloor t_J \rceil| \cdot |[J]| \cdot |X(S,J)| \cdot |X(J,\{s_J\})|}.$$
We compute the Varchenko determinants associated to the irreducible Coxeter groups in Section \ref{Full}. We deduce the Varchenko determinant associated to a finite Coxeter group from the following remark: If $W = W_1 W_2$, where $W_1$ and $W_2$ are two irreducible Coxeter groups, then
$$\det \mathcal{A}_W = (\det \mathcal{A}_{W_1})^{|W_2|} (\det \mathcal{A}_{W_2})^{|W_1|}.$$ 

\noindent Recall that there is a one-to-one correspondence between the elements of $W$ and the chambers of $\mathcal{A}_W$ such that: if the chamber $C$ corresponds to the neutral element $e$ and $C_x$ to another element $x$, then $C_x = Cx$ with $x = t_1 \dots t_r$, the $H_{t_i}$'s being the hyperplanes one goes through from $C$ to $C_x$ \cite[Theorem 1.69]{AbBr}.\\ Let $\langle \overline{C}_x \cap H_t \rangle$ be the subspace generated by the closed face $\overline{C}_x \cap H_t$ of the chamber $C_x$. Determining the multiplicity of the edge $E$ contained in the hyperplane $H_t$ consists of counting the half of the chambers $C_x$ which have the property that $E$ is the minimal edge containing $\langle \overline{C}_x \cap H_t \rangle$. For the proof of Theorem \ref{Th}, we need to introduce the three propositions that we prove in the next three sections. 

\begin{proposition} \label{Pro1}
Let $E$ be an edge of $\mathcal{A}$. Then $l(E) \neq 0$ if and only if there exist an irreducible subset $J$ of $S$ and an element $w$ of $W$ such that $E = E_{T_J^w}$.
\end{proposition}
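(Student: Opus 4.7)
My plan is to proceed in three main steps. First I would recall the basic structural fact that the pointwise $W$-stabilizer of any subspace of $\mathbb{R}^n$ is a parabolic subgroup, conjugate to some standard parabolic $W_J$. Consequently every edge $E$ of $\mathcal{A}_W$ is of the form $E = \mathrm{Fix}(W_J^w) = E_{T_J^w}$ for some $J \subseteq S$ and $w \in W$. Moreover the multiplicity $l$ is $W$-invariant: the action $C \mapsto gC$ of any $g \in W$ is a bijection on chambers which permutes the hyperplanes and preserves the minimal-edge relation, whence $l(E_{T_J^w}) = l(E_{T_J})$. It therefore suffices to show that $l(E_{T_J}) \neq 0$ if and only if $J$ is irreducible.

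Next I would analyse, for a chosen hyperplane $H_t \supseteq E_{T_J}$ (equivalently $t \in T_J$) and a chamber $C_x$, when the minimal edge containing the face $\overline{C_x} \cap H_t$ equals $E_{T_J}$. Translating $C_x$ back to the fundamental chamber via the $W$-action reduces $\overline{C_x} \cap H_t$ to an intersection of $\overline{C_e}$ with a hyperplane $H_{t'}$ for some reflection $t'$ conjugate to $t$. Since a point of $\overline{C_e}$ is fixed by $t'$ precisely when $t'$ lies in its parabolic stabilizer, the intersection $\overline{C_e} \cap H_{t'}$ is the closed face of $\overline{C_e}$ of type $J(t')$, whose affine span is the edge $E_{T_{J(t')}}$. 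Pushing this forward under the $W$-action gives that the minimal edge containing $\overline{C_x} \cap H_t$ is a $W$-translate of $E_{T_{J(t')}}$, and this edge coincides with $E_{T_J}$ exactly when the corresponding $W$-conjugate of $W_{J(t')}$ equals $W_J$. Summing up, $l(E_{T_J}) \neq 0$ if and only if $W_J$ is $W$-conjugate to a standard parabolic $W_{J'}$ which itself contains a reflection of full support $J'$.

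The proof then reduces to the following lemma: a standard parabolic subgroup $W_J$ contains a reflection of full support $J$ if and only if $J$ is irreducible. For the easy direction, if $J = J_1 \sqcup J_2$ splits into orthogonal parts then $W_J = W_{J_1} \times W_{J_2}$, and every reflection in this direct product lies in one factor, forcing its support to be a proper subset of $J$. Conversely, if $J$ is irreducible then the reflection associated to the highest root of the root system of $W_J$ has full support, because in an irreducible root system the highest root is a positive integer combination of \emph{all} simple roots. Since $W$-conjugation preserves both the isomorphism type and the irreducibility of a parabolic subgroup, this lemma combined with the characterization from the second step yields the equivalence. A short reflection-symmetry argument pairing $C_x$ with $s_t C_x$ across $H_t$ then ensures that the number of contributing chambers is always even, so that the multiplicity is a positive integer whenever at least one chamber contributes. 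The principal obstacle is the full-support reflection lemma, which relies on the root-system description of a finite Coxeter group; the book-keeping of the $W$-conjugacies between standard parabolics and their conjugates, in tracking which element of $W$ realises the required equality, is a secondary technical point.
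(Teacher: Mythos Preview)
Your proposal is correct and follows essentially the same route as the paper. The paper's proof (Section~2) likewise rests on the lemma that $W_J$ is irreducible iff it contains a reflection of full support (proved via the highest root), and on the computation that $\langle \overline{C}_x \cap H_t\rangle = E_{T_{J(t^{x^{-1}})}^x}$; your translation-to-the-fundamental-chamber argument is just this computation phrased in terms of the $W$-action rather than in the explicit Coxeter-complex language of Lemmas~\ref{LeHE}--\ref{LeFa}.
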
 

\noindent Let us introduce the set $L(E, t) := \big\{x \in W\ |\ \langle \overline{C}_x \cap H_t \rangle = E\big\}$.

\begin{proposition} \label{Pro2}
Let $J$ be an irreducible subset of $S$, $t$ a reflection with support $J$, $s$ a simple reflection, and $v$ an element of $W$ such that a reduced expression of $t$ is $s^v$. For a conjugate $K$ of $J$, let $c_{K,J}$ be an element of $W$ such that $K^{c_{K,J}} = J$, and for a conjugate $u$ of $t$ with support $J$, let $c_{u,t}$ be an element of $W$ such that $u^{c_{u,t}} = t$. Then, $$L(E_{T_J}, t) = \bigsqcup_{K \in [J]} c_{K,J} X(S,J) \Big(\bigsqcup_{u \in \lfloor t \rceil} c_{u,t} N_{W_J}(W_{\{s\}})^v \Big).$$
\end{proposition}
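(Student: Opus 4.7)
The plan is to convert the geometric condition $\langle \overline{C}_x \cap H_t\rangle = E_{T_J}$ into a purely group-theoretic constraint on $x$, and then to parameterise the resulting subset of $W$ via Howlett's structure theorem for the parabolic normaliser.

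I start with a preliminary lemma: for any reflection $r \in T$, the intersection $\overline{C}\cap H_r$ equals $F_{J(r)} := \overline{C}\cap\bigcap_{s\in J(r)}H_s$, the face of the fundamental chamber cut out by the walls indexed by the support $J(r)$, so its span is $\bigcap_{s\in J(r)}H_s = E_{T_{J(r)}}$. The inclusion $F_{J(r)} \subseteq \overline{C}\cap H_r$ holds because $r \in W_{J(r)}$ pointwise fixes $F_{J(r)}$; conversely, any $p \in \overline{C}\cap H_r$ is fixed by $r$, and its pointwise stabiliser in $W$ is the standard parabolic generated by $\{s\in S: p \in H_s\}$, which therefore contains $J(r)$, placing $p$ in $F_{J(r)}$.

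Using $C_x = Cx$, translating $\overline{C}_x\cap H_t$ by $x^{-1}$ produces $\overline{C}\cap H_r$ with $r := xtx^{-1}$, so the span condition becomes $x(E_{T_{J(r)}}) = E_{T_J}$; since a reflection subgroup is determined by its fixed space, this collapses to $xW_Jx^{-1} = W_K$ where $K := J(r) \in [J]$. For each such $K$, the set $\{x : xW_Jx^{-1} = W_K\}$ is the single left coset $c_{K,J}\cdot N_W(W_J)$, and Howlett's factorisation $N_W(W_J) = X(S,J)\cdot W_J$ writes any such $x$ as $x = c_{K,J}\,a\,m$ with $a \in X(S,J)$, $m \in W_J$. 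Because conjugation by $c_{K,J}$ gives an isomorphism of Coxeter systems $(W_J, J) \to (W_K, K)$ and $a$ induces a graph automorphism of the $J$-diagram, both preserve the support statistic, so the condition $J(xtx^{-1}) = K$ reduces to $mtm^{-1} \in \lfloor t\rceil$. Indexing by $u := mtm^{-1}$ gives $m \in c_{u,t}\cdot Z_{W_J}(t)$; and writing $t = s^v$ with $s \in J$, $v \in W_J$ (as forced by $J(t) = J$), one identifies $Z_{W_J}(t) = v^{-1}N_{W_J}(W_{\{s\}})v = N_{W_J}(W_{\{s\}})^v$, yielding the inner union.

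The main obstacle is verifying that the parameterisation $(K,a,u,z) \mapsto c_{K,J}\,a\,c_{u,t}\,z$ is a bijection onto $L(E_{T_J}, t)$. Disjointness across $K$ is immediate from disjointness of $N_W(W_J)$-cosets; within a fixed $K$ one recovers $a$ from the image of $c_{K,J}^{-1}x$ in the quotient $N_W(W_J)/W_J \cong X(S,J)$, then $u$ from the conjugation $m \mapsto mtm^{-1}$, and finally $z$ by $c_{u,t}^{-1}m$. This last recovery tacitly requires every element of $\lfloor t\rceil$ to be $W_J$-conjugate to $t$, so that $c_{u,t}$ may be chosen inside $W_J$ and the whole parameterisation stays within $N_W(W_J)$ — a point that, though natural from the definitions, needs separate justification.
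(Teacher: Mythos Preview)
Your argument is correct and mirrors the paper's three-lemma decomposition (its Lemmas~3.1--3.3, resting on the Coxeter-complex Lemmas~2.3--2.4): reduce the span condition to $E_{T_{J(t^{x^{-1}})}^x}=E_{T_J}$, split over the cosets $c_{K,J}N_W(W_J)$, factor via Howlett as $X(S,J)\cdot W_J$, and identify $C_{W_J}(t)=N_{W_J}(W_{\{s\}})^{v}$. Your support-preservation argument for $a\in X(S,J)$ is in fact more accurate than the paper's blanket assertion that $t^{z^{-1}}=t$ (which fails whenever $z$ induces a nontrivial diagram automorphism of $J$, though the needed conclusion $J(t^{z^{-1}})=J$ still holds), and the point you flag---that each $u\in\lfloor t\rceil$ must be $W_J$-conjugate to $t$ so that $c_{u,t}$ can be taken in $W_J$---is genuine and is satisfied because the Coxeter graph of a finite $W$ is a forest, so for connected $J$ two reflections of $W_J$ are $W$-conjugate iff they are $W_J$-conjugate.
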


\begin{proposition} \label{Pro3}
Let $u,v \in T$ and let $E$ be an edge contained in both $H_u$ and $H_v$. Then $$|L(E, u)| = |L(E, v)|.$$
\end{proposition}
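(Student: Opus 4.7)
The plan is to reduce the equality of cardinalities to a combinatorial count in the local reflection arrangement at the edge $E$. By Proposition~\ref{Pro1} together with the $W$-equivariance of the sets $L(E,t)$, one may assume $E = E_{T_J}$ for some irreducible $J \subseteq S$, so that $u$ and $v$ both lie in $T_J$, the set of reflections of the pointwise stabiliser $W_J$ of $E$. The first step I would carry out is a geometric characterisation of $L(E,t)$. Setting $F_x := \overline{C_x} \cap E$ and $\mathrm{Star}(E) := \{x \in W : \langle F_x \rangle = E\}$, one checks that for $x \in \mathrm{Star}(E)$ the walls of $C_x$ contained in the subarrangement $\mathcal{A}_E := \{H_r : r \in T_J\}$ are $|J|$ in number and form a simple system of $W_J$ acting on the orthogonal complement of $E$; moreover, $x \in L(E,t)$ if and only if $x \in \mathrm{Star}(E)$ and $t$ has full support $J$ with respect to this $W_J$-simple system of $C_x$. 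The ``only if'' direction uses that $\overline{C_x}\cap H_t \subseteq E$ forces $\overline{C_x}\cap H_t = F_x$, and the contrapositive of the full-support assertion is read off by producing, for any $t$ of non-full support, a ray of $C_x$ in $H_t\setminus E$ that would strictly enlarge $\overline{C_x}\cap H_t$.

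With this characterisation in place, the case when $u$ and $v$ are $W_J$-conjugate is immediate: for $u = g v g^{-1}$ with $g \in W_J$, left multiplication $x \mapsto gx$ restricts to a bijection $L(E,v) \to L(E,u)$, because $g$ fixes $E$ pointwise, preserves $\mathrm{Star}(E)$, and sends the $W_J$-simple system attached to $C_x$ to that attached to $C_{gx}$. The remaining case, in which $u$ and $v$ lie in distinct $W_J$-conjugacy classes of reflections (as happens when $W_J$ has multiple reflection classes, e.g.\ in types $B$, $F_4$, $H_3$, or $I_2(2k)$), requires more work. Here I plan to first replace each of $u$ and $v$ by a $W_J$-conjugate reflection of full support $J$ — such a representative exists in every $W_J$-class by inspection of the irreducible finite Coxeter types — and then invoke Proposition~\ref{Pro2}, which computes $|L(E_{T_J}, t)| = |[J]| \cdot |X(S,J)| \cdot |\lfloor t\rceil| \cdot |N_{W_J}(W_{\{s_t\}})|$ for $t$ of support $J$. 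The desired equality reduces to showing that the product $|\lfloor t\rceil| \cdot |N_{W_J}(W_{\{s_t\}})|$ is the same on all full-support reflections $t \in T_J$, across their possibly distinct $W$-conjugacy classes.

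The main obstacle is precisely this cross-class identity. I intend to establish it by double-counting the pairs $(D, t)$, where $D$ ranges over the chambers of $W_J$ and $t$ over the full-support reflections in the simple system attached to $D$: summing over $D$ yields $|W_J| \cdot |R_{W_J}|$, with $R_{W_J}$ the set of full-support reflections of $W_J$, while summing over $t$ gives $\sum_{t} n(t)$ with $n(t)$ the number of $W_J$-chambers in which $t$ has full support. Combining this with $|L(E,t)| = c(E) \cdot n(t)$ — where $c(E) = |[J]| \cdot |X(S,J)|$ is the number of connected components of the relative interior of $E$ — reduces the cross-class identity to the equidistribution $|R_{W_J} \cap C|/|C| = |R_{W_J}|/|T_{W_J}|$ across every reflection class $C$ of $W_J$. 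This equidistribution is the only non-formal input of the argument; it can be extracted from Proposition~\ref{Pro2} applied within each $W$-class, or verified by a brief case analysis in the irreducible finite Coxeter types.
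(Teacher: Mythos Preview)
Your approach is valid in outline but genuinely different from the paper's, and the comparison is worth making explicit. The paper gives a uniform geometric argument: it exhibits a rotation $r$ of $\mathbb{R}^n$ (generally \emph{not} in $W$) sending $\alpha_u$ to $\alpha_v$, checks in Lemma~\ref{LemConv} that $r$ nevertheless preserves the arrangement $\mathcal{A}_W$ (by reducing to the rank-$2$ dihedral subarrangement in $\langle\alpha_u,\alpha_v\rangle$), and observes that $r$ fixes $E\subseteq H_u\cap H_v$ pointwise. Then $r$ carries $L(E,u)$ bijectively onto $L(E,v)$ in one stroke, with no case distinction between conjugate and non-conjugate reflections.

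By contrast, your argument splits into two cases. The within-class case via the $W_J$-action is fine (though in the paper's conventions $W$ acts on chambers on the right, so the bijection should be $x\mapsto xg$ with $v^{g}=u$, not left multiplication). The cross-class case, however, is where the real content lies, and here your plan is weaker than advertised. The double-counting of pairs $(D,t)$ yields only the tautology $\sum_C |R_{W_J}\cap C|=|R_{W_J}|$ once you substitute $n(t)=|R_{W_J}\cap C|\cdot|W_J|/|C|$; it does not force the ratios $|R_{W_J}\cap C|/|C|$ to be equal across classes. Likewise, saying the equidistribution ``can be extracted from Proposition~\ref{Pro2}'' is circular: Proposition~\ref{Pro2} gives exactly the formula whose class-independence you are trying to establish. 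So your argument genuinely rests on the case-by-case check in the types with two reflection classes, namely $B_n$, $F_4$, and $I_2(2k)$ (not $H_3$, which has a single class). That check is easy and succeeds---for instance in $B_n$ one has classes of sizes $n$ and $n(n-1)$ with $1$ and $n-1$ full-support reflections respectively, giving the common ratio $1/n$---but it is a classification-dependent step that the paper's rotation argument avoids entirely.
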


\section{The Coxeter Complex}

\noindent Not all edges are relevant, or in other words, there are some edges whose multiplicities are null. We develop the condition for an edge $E$ to be relevant which means $l(E) \neq 0$. 

\begin{lemma} \label{LeIr}
A finite Coxeter group $W$ is irreducible if and only if $W$ has a reflection of full support.
\end{lemma}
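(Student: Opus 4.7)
The plan is to prove the two implications of the biconditional separately, relying on standard Coxeter group theory.

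For the ``if'' direction, I would argue by contraposition. Suppose $W$ is reducible, so $S$ admits a nontrivial partition $S = I \sqcup J$ with every element of $I$ commuting with every element of $J$; equivalently, $W \cong W_I \times W_J$. Every reflection $t \in T$ is of the form $t = s^w$ for some simple $s \in S$ and some $w \in W$. Without loss of generality $s \in I$, and we may write $w = w_I w_J$ with $w_I \in W_I$ and $w_J \in W_J$. Since every element of $W_J$ commutes with $s$, we obtain $t = s^{w_I} \in W_I$. Reading off a reduced expression for $w_I$ in letters from $I$, we get one for $t$ using only letters from $I$, so $J(t) \subseteq I \subsetneq S$.

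For the ``only if'' direction, I would use the root system $\Phi$ of $W$ with simple system $\Delta = \{\alpha_1, \dots, \alpha_n\}$ paired with $S = \{s_1, \dots, s_n\}$. The reflections of $W$ are exactly the $s_\alpha$ for $\alpha \in \Phi^+$, and a standard fact is that when $\alpha = \sum_{i=1}^n c_i \alpha_i$ is a positive root, the support $J(s_\alpha)$ coincides with $\{s_i : c_i > 0\}$. Since $W$ is irreducible if and only if $\Phi$ is irreducible, I would invoke the existence of a \emph{highest root} $\widetilde{\alpha}$ in an irreducible root system, whose expansion in the simple basis has strictly positive coefficient on every simple root. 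Then $s_{\widetilde{\alpha}}$ is a reflection of $W$ with $J(s_{\widetilde{\alpha}}) = S$.

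The main obstacle is the identification of the support of a reflection with the support of its corresponding positive root; this is a classical fact but not entirely trivial, and one must be careful that reduced expressions for $s_\alpha$ need not be unique. An alternative, more combinatorial route that avoids explicit root-system input would be induction on $|S|$: for irreducible $W$ with $|S| \geq 2$, pick a leaf $s_0$ of the Coxeter diagram (which is always a tree for finite irreducible $W$), so that $W_{S \setminus \{s_0\}}$ remains irreducible; by induction there is a reflection $t$ of $W_{S \setminus \{s_0\}}$ with support $S \setminus \{s_0\}$, and one then shows that the reflection $t s_0 t$ has full support. The delicate point there is ruling out $J(t s_0 t) \subsetneq S$; the case $s_0 \notin J(t s_0 t)$ is immediate (else $s_0 = t \cdot (t s_0 t) \cdot t$ would lie in $W_{S \setminus \{s_0\}}$), and the remaining case is handled using that supports of reflections are connected subsets of the Coxeter diagram.
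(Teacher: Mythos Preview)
Your proposal is correct and follows essentially the same route as the paper: the contrapositive for the ``if'' direction via the direct product decomposition $W \cong W_I \times W_J$, and the highest root for the ``only if'' direction. You supply more detail than the paper (the explicit factorization $w = w_I w_J$, the identification $J(s_\alpha) = \{s_i : c_i > 0\}$, and an alternative inductive argument), whereas the paper simply cites Humphreys for the highest root, including the non-crystallographic cases $H_3$, $H_4$.
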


\begin{proof}
If $W$ is irreducible, then $W$ has a highest root \cite[2.10 Construction of root systems, 2.13 Groups of types $H_3$ and $H_4$]{Hu}, and the reflection corresponding to the highest root has full support.\\
Suppose that $W$ is the product of nontrivial Coxeter groups $W_1$ and $W_2$, and let $t$ be a reflection in $W$. Without loss of generality, we can suppose that $t$ is a conjugate of a simple reflection $s$ of $W_1$, hence $t$ lies in $W_1$ and can not have full support.
\end{proof}

\noindent We continue our investigation by using the Coxeter complex. Recall that the Coxeter complex $\mathcal{C}$ of $W$ is the set $\{W_Jx\ |\ J \subseteq S,\, x \in W\}$ of faces. The Coxeter complex is a combinatorial setup which permits to study the geometrical structure of $\mathcal{A}_W$. Indeed, $\mathbb{R}^n$ is partitioned by $\mathcal{C}$ \cite[1.15 The Coxeter complex]{Hu}.\\
The chamber $C_x$ is identified with the singleton $\{x\}$. The coset $W_J w$ is a face of $\{x\}$ if and only if $W_J w = W_J x$. Then the closure of $C_x$ is
\begin{equation} \label{Cx}
\overline{C}_x := \{W_Jx\ |\ J \subseteq S\}.
\end{equation}
More generally, $W_K w$ is a face of $W_J x$ if and only if $W_J x \subseteq W_K w$ and $W_K w = W_K x$. Then the closure of a face $W_Jx$ of $C_x$ is
\begin{equation} \label{WJx}
\overline{W_Jx} := \{W_Kx\ |\ J \subseteq K \subseteq S\}.
\end{equation}
Hyperplanes, and more generally egdes, can be described as collections of the faces they consist of. We extend the definition of $J(x)$ to a subset $X$ of $W$ with the following way: $$J(X) := \bigcup_{x \in X} J(x).$$

\begin{lemma} \label{LeHE}
Let $t \in T$. Then $H_t = \{W_J w\ |\ J \subseteq S,\, w \in W,\, J(t^{w^{-1}}) \subseteq J\}$.
\end{lemma}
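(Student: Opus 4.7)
The plan is to combine the combinatorial picture of faces from (\ref{Cx})--(\ref{WJx}) with the chamber-labeling convention $C_x = Cx$ so as to reduce $W_Jw \subseteq H_t$ to a purely algebraic condition on $t$, and then convert that condition into the support criterion of the lemma.

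First I would handle the single wall $W_{\{s\}}w = \{w,sw\}$: unwinding $C_x = Cx$ via a reduced reflection expression $x = t_1\cdots t_r$ (each factor implements one wall-crossing, so $C_x$ is realized geometrically as $(t_r\cdots t_1)C = x^{-1}C$) gives $C_{sw} = (w^{-1}sw)C_w$, so the reflection across the common wall is $w^{-1}sw$ and the wall lies in the unique hyperplane $H_{s^w}$, where $s^w := w^{-1}sw$ in the paper's conjugation convention. By (\ref{WJx}), $W_Jw \subseteq \overline{W_{\{s\}}w}$ for every $s\in J$, so $W_Jw$ sits inside the codimension-$|J|$ flat $\bigcap_{s\in J}H_{s^w}$ and in fact spans it (the face is a top-dimensional cell of that linear flat). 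The pointwise stabilizer of the flat is the reflection subgroup $\langle s^w : s\in J\rangle = w^{-1}W_Jw$ with reflection set $w^{-1}T_Jw$, so $H_t$ contains $W_Jw$ if and only if $t \in w^{-1}T_Jw$, equivalently $wtw^{-1} \in T_J$, equivalently $t^{w^{-1}} \in T_J$ (using $t^{w^{-1}} = (w^{-1})^{-1}t(w^{-1}) = wtw^{-1}$).

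The last step invokes the standard dictionary $T_J = \{u\in T : J(u)\subseteq J\}$: any reduced expression of $u\in W_J = \langle J\rangle$ uses only simple reflections from $J$, and by the well-definedness of the support recalled in the paper a reflection with support in $J$ conversely lies in $W_J$. Applied to $u = t^{w^{-1}}$, this turns $t^{w^{-1}} \in T_J$ into the stated condition $J(t^{w^{-1}}) \subseteq J$. The delicate step is the opening wall identification: making sure the unwinding of $C_x = Cx$ produces the exponent $w^{-1}$ (rather than $w$) in $t^{w^{-1}}$; once that convention is pinned down, the remainder is routine parabolic theory.
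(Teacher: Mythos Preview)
Your argument is correct, but the paper's own proof is considerably more direct. Rather than passing through the geometric realisation and invoking the pointwise-stabilizer description of conjugate parabolics, the paper works purely in the abstract Coxeter complex: since $W$ acts by right multiplication on cosets, the face $W_Jw$ lies in $H_t$ precisely when it is fixed by $t$, i.e.\ $W_Jwt = W_Jw$, which unwinds in one line to $t \in W_J^w$, equivalently $t^{w^{-1}} \in W_J$, equivalently $J(t^{w^{-1}}) \subseteq J$. Your route---identifying the wall $W_{\{s\}}w$ as lying in $H_{s^w}$, showing that the face $W_Jw$ spans the flat $\bigcap_{s\in J}H_{s^w}$, and then using that the pointwise stabilizer of this flat is exactly $w^{-1}W_Jw$---reaches the same conclusion but leans on a nontrivial structural fact (that the pointwise stabilizer of the fixed space of a parabolic is that parabolic, essentially the face-stabilizer description) which the algebraic one-liner sidesteps entirely. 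The payoff of your approach is a clearer geometric picture of why the support condition appears; the cost is the extra machinery and the bookkeeping with action conventions that you yourself flag as the delicate step.
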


\begin{proof}
As $W$ acts by right multiplication, $W_J w \in H_t$ if and only if $W_J w t = W_J w$, i.e. $W_J^w t = W_J^w$, i.e. $t \in W_J^w$, i.e. $t^{w^{-1}} \in W_J$, i.e. $J(t^{w^{-1}}) \subseteq J$.
\end{proof}

\begin{lemma} \label{LeIn}
Let $x \in W$ and $t \in T$. Then, $\overline{C}_x \cap H_t = \overline{W_{J(t^{x^{-1}})}x}$.
\end{lemma}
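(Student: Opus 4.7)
The plan is to chase the definitions: every face of $\overline{C}_x$ is presented in a canonical form by equation (\ref{Cx}), so intersecting with $H_t$ just amounts to cutting out those $J$ that satisfy the membership condition supplied by Lemma \ref{LeHE}, and the resulting family of cosets is exactly the closed face described by equation (\ref{WJx}).

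Concretely, I would first invoke (\ref{Cx}) to write
\[
\overline{C}_x \cap H_t \;=\; \{ W_J x \,:\, J \subseteq S \} \cap H_t.
\]
Next, I would apply Lemma \ref{LeHE} with $w = x$, which tells us that the coset $W_J x$ lies in $H_t$ precisely when $J(t^{x^{-1}}) \subseteq J$. Combining these two observations yields
\[
\overline{C}_x \cap H_t \;=\; \{ W_J x \,:\, J \subseteq S,\ J(t^{x^{-1}}) \subseteq J \}.
\]
Finally, by equation (\ref{WJx}) applied with $J$ replaced by $J(t^{x^{-1}})$, the right-hand side is exactly $\overline{W_{J(t^{x^{-1}})} x}$, which is the desired identity.

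There is no real obstacle to speak of; the one point that deserves a brief justification is that the representation $W_J x$ of a face of $\overline{C}_x$ uses the \emph{specific} coset representative $x$ (rather than some other $w$ with $W_J w = W_J x$), so that Lemma \ref{LeHE} is being applied with $w = x$ and the support condition really reads $J(t^{x^{-1}}) \subseteq J$. This is built into (\ref{Cx}), so the argument is essentially a one-line unfolding of the three facts already established.
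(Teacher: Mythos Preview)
Your proof is correct and follows exactly the same line as the paper's own argument: invoke (\ref{Cx}), use Lemma \ref{LeHE} to identify which faces $W_Jx$ lie in $H_t$, and then recognise the resulting family of cosets as $\overline{W_{J(t^{x^{-1}})}x}$ via (\ref{WJx}). Your extra remark about using the specific representative $x$ when applying Lemma \ref{LeHE} is a welcome clarification that the paper leaves implicit.
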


\begin{proof}
We have 
\begin{align*}
\overline{C}_x \cap H_t\ =\ & \{W_Jx\ |\ J \subseteq S\} \cap \{W_J w\ |\ J \subseteq S,\, w \in W,\, J(t^{w^{-1}}) \subseteq J\} \\
&\ \text{(Equation \ref{Cx} and Lemma \ref{LeHE})} \\ 
=\ &\ \{W_Jx\ |\ J(t^{x^{-1}}) \subseteq J\} \\
=\ &\ \overline{W_{J(t^{x^{-1}})}x} \quad \text{(Equation \ref{WJx})}
\end{align*}
\end{proof}

\begin{lemma} \label{LeFa}
Let $W_Jx$ be a face of the Coxeter complex. Then, the subspace $\langle W_Jx \rangle$ generated by $W_Jx$ is the edge $E_{T_J^x}$. 
\end{lemma}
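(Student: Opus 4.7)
The plan is to establish $\langle W_J x \rangle = E_{T_J^x}$ by proving both containments: Lemma~\ref{LeHE} yields the easy direction, and a dimension count yields the other.

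First, I would show $\langle W_J x \rangle \subseteq E_{T_J^x}$ directly from Lemma~\ref{LeHE}. Any $u \in T_J^x$ has the form $u = t^x$ for some $t \in T_J$, so $u^{x^{-1}} = t \in W_J$ and $J(u^{x^{-1}}) = J(t) \subseteq J$. Lemma~\ref{LeHE} then forces $W_J x \subseteq H_u$. Intersecting over all $u \in T_J^x$ gives $W_J x \subseteq E_{T_J^x}$, and passing to generated subspaces preserves this inclusion.

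For the reverse inclusion I would match dimensions. On the one side, the walls of the chamber $C_x$ that contain the face $W_J x$ are, again by Lemma~\ref{LeHE}, exactly the $|J|$ hyperplanes $H_{s^x}$ for $s \in J$ (since $W_J x \subseteq H_{s^x}$ iff $J(s) = \{s\} \subseteq J$), and these are linearly independent because they are the $x$-conjugates of the $|J|$ independent walls of the fundamental chamber indexed by $J$. Hence $W_J x$ is a relatively open cone of dimension $n - |J|$ and $\dim\langle W_J x \rangle = n - |J|$. On the other side, $E_{T_J^x} = \bigcap_{u \in T_J^x} H_u$ equals $\bigcap_{s \in J} H_{s^x}$, because any vector fixed by the generators $\{s^x : s \in J\}$ of $W_J^x$ is fixed by the whole group and thus by every reflection in $T_J^x$. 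So $E_{T_J^x}$ is the fixed subspace of the rank-$|J|$ parabolic $W_J^x$, of codimension $|J|$, giving $\dim E_{T_J^x} = n - |J|$. Two subspaces of the same finite dimension with one contained in the other must coincide, yielding $\langle W_J x \rangle = E_{T_J^x}$.

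The only nontrivial input is the dimension count, namely that the $|J|$ simple reflections of a parabolic $W_J$ in a finite Coxeter group determine $|J|$ linearly independent reflection hyperplanes, so their common fixed subspace has codimension exactly $|J|$. This is a classical property of the reflection representation and I would cite it rather than reprove it.
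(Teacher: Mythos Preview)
Your proof is correct. Both you and the paper start from Lemma~\ref{LeHE}, but you handle the two inclusions separately (one direct, one by a dimension count), whereas the paper argues the single equivalence $H_t \supseteq W_Jx \iff t \in T_J^x$ and then implicitly uses the general fact that the linear span of a face of a real hyperplane arrangement equals the intersection of all arrangement hyperplanes containing it. Your dimension argument, reducing $E_{T_J^x}$ to $\bigcap_{s\in J} H_{s^x}$ via the fixed-point set of $W_J^x$ and then invoking the linear independence of the simple roots, makes that step explicit and self-contained; the paper's route is shorter but leans on an unstated (though standard) structural fact about faces of arrangements.
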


\begin{proof}
It is clear that the subspace generated by $\overline{W_Jx}$ is equal to the subspace generated by $W_Jx$.
From the proof of Lemma \ref{LeHE}, we know that $H_t$ is a hyperplane containing $W_Jx$ is equivalent to $J(t^{x^{-1}}) \subseteq J$ which is equivalent to $t \in T_J^x$.
\end{proof}

\noindent Let us take a reflection $t$ of $U$. Recall that $l(E_U)$ is half the number of chambers $C_x$ which have the property that $E_U$ is the minimal intersection containing $\langle \overline{C}_x \cap H_t \rangle$. Minimality implies equality that is, for the chamber $C_x$ to be counted, we must have $\langle \overline{C}_x \cap H_t \rangle = E_U$ i.e. $\langle \overline{W_{J(t^{x^{-1}})}x} \rangle = E_U$ (Lemma \ref{LeIn}) i.e. $E_{T_{J(t^{x^{-1}})}^x} = E_U$ (Lemma \ref{LeFa}). Moreover, since $t^{x^{-1}}$ is a full support reflection of the group $W_{J(t^{x^{-1}})}$, we know from Lemma \ref{LeIr} that $J(t^{x^{-1}})$ is an irreducible subset of $S$. So $E_U$ is of the form $E_J^x$, where $J$ is irreducible, otherwise $l(E_U) = 0$. That proves Proposition \ref{Pro1}.

\section{The Chambers to Consider}

\noindent For a relevant edge $E$ and a hyperplane $H_t$ containing $E$, we determine the chambers $C_x$ such that $\langle \overline{C}_x \cap H_t \rangle = E$.

\begin{lemma} \label{LeDe}
Let $J$ be an irreducible subset of $S$, $t$ a reflection with support $J$, and $E$ the relevant edge $E_{T_J}$. For each $K$ in the Coxeter class of $J$, let $c_{K,J}$ be the coset of minimal length of $N_W(W_J)$ such that $K^{c_{K,J}} = J$. Then, $$L(E, t) = \bigsqcup_{K \in [J]} c_{K,J}\, \big\{x \in N_W(W_J)\ \big|\ J(t^{x^{-1}}) = J\big\}.$$
\end{lemma}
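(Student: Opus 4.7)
The plan is to unfold the definition of $L(E,t)$ by means of Lemmas \ref{LeIn} and \ref{LeFa}, and then to split the resulting equation into a ``shape'' condition and a normalizer condition. By Lemma \ref{LeIn}, $\overline{C}_x \cap H_t = \overline{W_{J(t^{x^{-1}})} x}$, and by Lemma \ref{LeFa}, $\langle \overline{C}_x \cap H_t \rangle = E_{T_{J(t^{x^{-1}})}^x}$. Hence $x \in L(E,t)$ is equivalent to $E_{T_{J(t^{x^{-1}})}^x} = E_{T_J}$. Setting $K := J(t^{x^{-1}})$, the subspace $E_{T_K^x}$ is the fixed subspace of the reflection subgroup $W_K^x$, while $E_{T_J}$ is the fixed subspace of $W_J$. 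Since a finite reflection subgroup of $W$ is uniquely determined by its fixed subspace, the condition reduces to the purely algebraic equation $W_K^x = W_J$.

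Next, I would observe that $W_K^x = W_J$ forces $W_K$ and $W_J$ to be conjugate standard parabolic subgroups, hence $K \in [J]$. Using the specified element $c_{K,J}$ with $K^{c_{K,J}} = J$, I write $x = c_{K,J} x'$, so that $x' = c_{K,J}^{-1} x$. A direct calculation gives $W_K^x = (W_K^{c_{K,J}})^{x'} = W_J^{x'}$, so $W_K^x = W_J$ is equivalent to $x' \in N_W(W_J)$; conversely, every pair $(K, x')$ with $K \in [J]$ and $x' \in N_W(W_J)$ yields such an $x$. The remaining requirement $J(t^{x^{-1}}) = K$ is handled by the standard identity $J(w^g) = J(w)^g$, valid whenever $J(w)^g \subseteq S$. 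Writing $t^{x^{-1}} = (t^{x'^{-1}})^{c_{K,J}^{-1}}$ and using $K = J^{c_{K,J}^{-1}}$, this requirement rewrites as $J(t^{x'^{-1}}) = J$.

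Finally, disjointness of the union is immediate, because each $x \in L(E,t)$ uniquely determines $K = J(t^{x^{-1}})$, so distinct values of $K \in [J]$ produce disjoint pieces on the right-hand side. The principal obstacle is conceptual bookkeeping rather than a deep technicality: the argument rests on two classical facts about finite Coxeter groups---reflection subgroups are pinned down by their fixed loci, and the support function is equivariant under conjugation provided the conjugate remains inside $S$---and the computation must be organized so that these two facts are applied at exactly the right places, in particular so that one correctly passes between $W_K^x$ and $W_{K^x}$ only when this is legitimate.
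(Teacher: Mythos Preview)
Your proposal is correct and follows essentially the same route as the paper: both reduce $x\in L(E,t)$ via Lemmas \ref{LeIn} and \ref{LeFa} to the equation $E_{T_{J(t^{x^{-1}})}^x}=E_{T_J}$, translate this into the algebraic condition $W_{J(t^{x^{-1}})}^x=W_J$ (the paper records this as the three bullets $|J(t^{x^{-1}})|=|J|$, $K\in[J]$, $x\in c_{K,J}N_W(W_J)$), and then pass from $x$ to $x'=c_{K,J}^{-1}x$ to obtain the condition $J(t^{x'^{-1}})=J$. The only cosmetic difference is that you invoke the equivariance identity $J(w^g)=J(w)^g$ explicitly, whereas the paper handles the same step by comparing cardinalities $|J(t^{x^{-1}})|=|J(t^{y^{-1}})|$ together with the containment $J(t^{y^{-1}})\subseteq J$.
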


\begin{proof}
We have
\begin{align*}
L(E, t)\ &\ = \big\{x \in W \ |\ \langle \overline{C}_x \cap H_t \rangle = E\big\} \\
\ &\ = \big\{x \in W \ |\ \langle \overline{W_{J(t^{x^{-1}})}x} \rangle = E\big\} \quad \text{(Lemma \ref{LeIn})} \\
\ &\ = \big\{x \in W \ |\ E_{T_{J(t^{x^{-1}})}^x} = E_{T_J}\big\} \quad \text{(Lemma \ref{LeFa})} 
\end{align*}
Denoting $J(t^{x^{-1}})$ by $K$, the equality $E_{T_{J(t^{x^{-1}})}^x} = E_{T_J}$ means that
\begin{itemize}
\item[$\bullet$] $|J(t^{x^{-1}})| = |J|$,
\item[$\bullet$] $K \in [J]$,
\item[$\bullet$] and $x \in c_{K,J}N_W(W_J)$.
\end{itemize}
Hence $$L(E, t) = \bigsqcup_{K \in [J]} \big\{x \in c_{K,J}N_W(W_J)\ \big|\ |J(t^{x^{-1}})| = |J|\big\}.$$
Let $x = c_{K,J}y$ with $y \in N_W(W_J)$. Since $|J(t^{x^{-1}})| = |J|$ if and only if $|J(t^{y^{-1}})| = |J|$ if and only if $J(t^{y^{-1}}) = J$, we obtain the result.
\end{proof}

\noindent Recall that $X(S,J) := \{x \in X_J \ |\ J^x = J\}$. We introduce the set $$W_J(t) := \{x \in W_J \ |\ J(t^{x^{-1}}) = J\}.$$

\begin{lemma} \label{LeSe}
Let $J$ be a irreducible subset of $S$. Consider a reflection $t$ of $W_J$ with support $J$. Then,
$$\big\{x \in N_W(W_J)\ \big|\ J(t^{x^{-1}}) = J\big\}\ =\ X(S,J) \cdot W_J(t).$$
\end{lemma}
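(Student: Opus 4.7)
The plan is to combine Howlett's decomposition $N_W(W_J) = W_J \cdot X(S,J)$ (which, since $W_J$ is normal in its normalizer, also reads $X(S,J) \cdot W_J$) with the fact that conjugation by an element of $X(S,J)$ is an automorphism of the Coxeter system $(W_J, J)$. Every $x \in N_W(W_J)$ can thus be written as $x = ab$ with $a \in X(S,J)$ and $b \in W_J$; the goal is to show that the condition $J(t^{x^{-1}}) = J$ reduces to $J(t^{b^{-1}}) = J$, i.e.\ to $b \in W_J(t)$.

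The key intermediate statement I would establish first is: for every $a \in X(S,J)$ and every $y \in W_J$,
$$J(y^a) = J(y)^a.$$
This holds because $J^a = J$ means that conjugation by $a$ permutes $J$ setwise, and the relations $(s_is_j)^{m_{ij}}=1$ transport through conjugation unchanged, so the map $s \mapsto s^a$ extends to a Coxeter automorphism of $(W_J, J)$. Such an automorphism sends reduced expressions to reduced expressions, and therefore turns the support $\{s_{i_1},\ldots,s_{i_l}\}$ of a reduced expression for $y$ into the support of a reduced expression for $y^a$.

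With that in hand, I would write $t^{x^{-1}} = t^{b^{-1}a^{-1}} = (t^{b^{-1}})^{a^{-1}}$. Since $t^{b^{-1}} \in W_J$ and $a^{-1} \in X(S,J)$ (the set is stable under inversion by its definition), the key identity gives
$$J(t^{x^{-1}}) \;=\; J(t^{b^{-1}})^{a^{-1}}.$$
As $a^{-1}$ permutes $J$ setwise, the right-hand side equals $J$ if and only if $J(t^{b^{-1}}) = J$, that is if and only if $b \in W_J(t)$. This proves the inclusion $\subseteq$; the reverse inclusion is immediate, because for $a \in X(S,J)$ and $b \in W_J(t)$ we have $ab \in N_W(W_J)$ and the same computation gives $J(t^{(ab)^{-1}}) = J^{a^{-1}} = J$.

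The only real point of care is the Coxeter-automorphism step; everything else is a direct substitution. I do not anticipate a genuine obstacle, but I would want to make the support-preservation statement precise before invoking it, since it is what licenses the crucial reduction from $x$ to its $W_J$-part $b$.
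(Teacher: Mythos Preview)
Your argument is correct and follows essentially the same route as the paper: use Howlett's decomposition $N_W(W_J)=W_J\cdot X(S,J)=X(S,J)\cdot W_J$ and then strip off the $X(S,J)$-factor from the support condition. The only difference is one of care: the paper simply asserts $t^{z^{-1}}=t$ for $z\in X(S,J)$, whereas you argue (more robustly) that conjugation by $z$ is a Coxeter automorphism of $(W_J,J)$ and hence $J(y^z)=J(y)^z$, which is precisely what is needed and directly yields the stated form $X(S,J)\cdot W_J(t)$.
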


\begin{proof}
We have $N_W(W_J) = W_J \cdot X(S,J)$ \cite[Corollary 3]{Ho}. So
$$\big\{x \in N_W(W_J)\ \big|\ J(t^{x^{-1}}) = J\big\}\ =\ \big\{yz\ \big|\ y \in W_J,\, z \in X(S,J),\, J(t^{z^{-1} y^{-1}}) = J\big\}.$$
Since we always have $t^{z^{-1}} = t$, the remaining condition is $J(t^{y^{-1}}) = J$. 
\end{proof}

\noindent We write $C(x)$ for the centralizer of the element $x$ of $W$.

\begin{lemma} \label{LeWj}
Let $J$ be a irreducible subset of $S$. Consider a reflection $t = s^v$ with support $J$ where $s$ is a simple reflection and $v$ and element of $W_J$. For another reflection $u$ of $W_J$ with support $J$ and conjugate to $t$, let $c_{u,t}$ be 
the coset of minimal length of $C(t)$ such that $u^{c_{u,t}} = t$. Then,
$$W_J(t) = \bigsqcup_{u \in \lfloor t \rceil} c_{u,t} N_{W_J}(W_{\{s\}})^v.$$
\end{lemma}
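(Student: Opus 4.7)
The strategy is to re-express the right-hand side in a transparent form and then set up an obvious map from $W_J(t)$ onto $\lfloor t \rceil$ and compute its fibres.

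First I would simplify $N_{W_J}(W_{\{s\}})^v$. Since $v \in W_J$ and $s^v = v^{-1}sv = t$, the simple reflection $s = vtv^{-1}$ lies in $W_J$ and hence in $J$. As $s$ has order two, $N_{W_J}(W_{\{s\}})$ coincides with the centralizer $C_{W_J}(s)$. Conjugation by $v$ is an inner automorphism of $W_J$ (here $v\in W_J$ matters) mapping $s$ to $t$, so it sends $C_{W_J}(s)$ bijectively onto $C_{W_J}(t)$; therefore $N_{W_J}(W_{\{s\}})^v = C_{W_J}(t)$. The lemma is thus equivalent to the cleaner statement
$$W_J(t) \;=\; \bigsqcup_{u \in \lfloor t \rceil} c_{u,t}\,C_{W_J}(t).$$

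Next I would introduce the map $\phi \colon W_J(t) \to \lfloor t \rceil$ defined by $\phi(x) := t^{x^{-1}} = x t x^{-1}$. This is well defined: for $x \in W_J$ the element $\phi(x)$ lies in $W_J$, has support $J$ by definition of $W_J(t)$, and is $W$-conjugate to $t$. To compute the fibre over $u \in \lfloor t \rceil$, the relation $u^{c_{u,t}} = t$ rewrites as $c_{u,t}\,t\,c_{u,t}^{-1} = u$, so $xtx^{-1} = u$ if and only if $c_{u,t}^{-1}x$ commutes with $t$; that is, $\phi^{-1}(u) = c_{u,t}\,C(t) \cap W_J$.

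The main obstacle is to identify this intersection with $c_{u,t}\,C_{W_J}(t)$, equivalently to show that the minimal-length representative $c_{u,t}$ can be taken inside $W_J$, i.e.\ every $u \in \lfloor t \rceil$ is actually $W_J$-conjugate to $t$ rather than merely $W$-conjugate. Here the irreducibility of $J$ enters crucially: both $u$ and $t$ are reflections of full support in the irreducible Coxeter group $W_J$, and I would invoke the standard fact that two reflections of a parabolic subgroup $W_J$ which share the same support $J$ and are conjugate in the ambient group $W$ are already conjugate inside $W_J$ itself. Given this, the intersection $c_{u,t}\,C(t) \cap W_J$ factors as $c_{u,t}\,(C(t)\cap W_J) = c_{u,t}\,C_{W_J}(t)$ and $\phi$ is surjective.

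Finally, the cosets $c_{u,t}\,C_{W_J}(t)$ for distinct values of $u \in \lfloor t \rceil$ are automatically disjoint, because they sit in distinct fibres of the well-defined map $\phi$; their union therefore exhausts $W_J(t)$. Combining this with the first paragraph delivers the decomposition $W_J(t) = \bigsqcup_{u \in \lfloor t \rceil} c_{u,t}\,N_{W_J}(W_{\{s\}})^v$ claimed by the lemma. The routine bookkeeping is everything except the $W$-versus-$W_J$ conjugacy step, which is where the hypothesis that $J$ is irreducible (and $t$ has full support in $W_J$) is used in an essential way.
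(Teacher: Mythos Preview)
Your proof is correct and follows essentially the same route as the paper: identify $N_{W_J}(W_{\{s\}})^v$ with the centralizer of $t$ in $W_J$ and then decompose $W_J(t)$ into cosets of this centralizer indexed by $\lfloor t \rceil$ via the map $x \mapsto t^{x^{-1}}$. You are in fact more careful than the paper's short argument, which writes $C(t)$ without distinguishing the ambient centralizer from $C_{W_J}(t)$ and does not address the $W$-versus-$W_J$ conjugacy point you explicitly flag.
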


\begin{proof}
The equality $J(t^{x^{-1}}) = J$ means that $x^{-1} \in C(t)c_{t,u}$ or $x \in c_{u,t}C(t)$, where $u$ a conjugate of $t$ with support $J$. Then, $$W_J(t) = \bigsqcup_{u \in \lfloor t \rceil} c_{u,t}C(t).$$ Since $C(t) = C(s)^v$ and $C(s) = N_{W_J}(W_{\{s\}})$, we get the result.
\end{proof}

\noindent Proposition \ref{Pro2} is a combination of Lemma \ref{LeDe}, Lemma \ref{LeSe}, and Lemma \ref{LeWj}.

\section{Invariance of the Multiplicity}

\noindent For the calculation of the multiplicity of an edge $E$, one has to choose a hyperplane containing $E$. We prove that the result of the calculation is independent of the choice of the hyperplane containing $E$.

\begin{lemma} \label{LemConv}
Consider two hyperplanes $H_u$ and $H_v$ of $\mathcal{A}_W$ associated with the orthogonal vectors $\alpha_u$ and $\alpha_v$ respectively, of equal length and from the positive root system . Let $r$ be the rotation of $\mathbb{R}^n$, not necessarily in $W$, which transforms $\alpha_u$ to $\alpha_v$. Then, $r(\mathcal{A}_W) = \mathcal{A}_W$.
\end{lemma}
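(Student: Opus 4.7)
My plan is to decompose the rotation $r$ as a product of two Euclidean reflections that fix the orthogonal complement of the plane $P = \mathrm{span}(\alpha_u,\alpha_v)$ pointwise, and then to identify that composition with an element of $W$. Concretely, setting $\beta = \alpha_u + \alpha_v$, I would verify by direct calculation that $r = s_\beta \circ s_u$, where $s_\gamma$ denotes the Euclidean reflection across $\gamma^\perp$; on $P^\perp$ both reflections are the identity because every vector orthogonal to $P$ is orthogonal to both $\alpha_u$ and $\beta$, while on $P$ the composition is a rotation by the angle from $\alpha_u$ to $\alpha_v$ (the two reflection axes within $P$ make an angle of $\theta/2$, and composing two reflections at that angle yields rotation by $\theta$). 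A quick computation using $|\alpha_u|=|\alpha_v|$ confirms $r(\alpha_u) = \alpha_v$.

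Next I would invoke the standard fact that $W$ acts transitively on the positive roots of each fixed length, so there exists $w \in W$ with $w(\alpha_u)=\alpha_v$, and this $w$ automatically permutes the hyperplanes of $\mathcal A_W$ by conjugation of reflections. To bridge the gap between $r$ and $w$, I would look for $w$ inside the reflection subgroup $W_P \leq W$ generated by the reflections whose roots lie in $P$. The intersection of the root system with $P$ is a rank-two (dihedral) sub-root-system, and every element of $W_P$ fixes $P^\perp$ pointwise while acting on $P$ by the dihedral representation; hence if $W_P$ contains a rotation carrying $\alpha_u$ to $\alpha_v$, that element must agree with $r$ on both $P$ and $P^\perp$, and so coincides with $r$. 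This would place $r \in W_P \subseteq W$ and yield $r(\mathcal A_W)=\mathcal A_W$ immediately.

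The main obstacle is exactly this last identification. Within the dihedral group $W_P$ the element sending $\alpha_u$ to $\alpha_v$ can be either a rotation or a reflection, depending on the rank-two type and on the parity of the angle between $\alpha_u$ and $\alpha_v$; when only a reflection is available in $W_P$, the rotation $r$ is not itself an element of $W$, and the preservation of $\mathcal A_W$ by $r$ has to be checked by other means. I expect the genuinely difficult part of the proof to be handling this case, perhaps by decomposing each root $\gamma$ as $\gamma = \gamma_P + \gamma_{P^\perp}$ and checking that $r(\gamma) = r|_P(\gamma_P) + \gamma_{P^\perp}$ remains proportional to a root of $W$, using specific features of Coxeter root systems (for instance, that $\Phi \cap P$ exhausts the roots whose $P$-component can appear as $r|_P(\gamma_P)$).
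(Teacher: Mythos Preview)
Your primary strategy---writing $r=s_\beta\circ s_u$ with $\beta=\alpha_u+\alpha_v$ and then trying to place $r$ inside $W$ via the rank-two reflection subgroup $W_P$---cannot succeed in general, and you correctly anticipate this.  The lemma itself flags ``not necessarily in $W$'', and an easy instance already occurs in $A_3$: with $\alpha_u=e_1-e_2$ and $\alpha_v=e_3-e_4$ (orthogonal roots of equal length), $W_P=\langle s_u,s_v\rangle\cong(\mathbb{Z}/2)^2$ contains only the rotation by~$\pi$, while the required rotation by~$\pi/2$ is not a coordinate permutation, hence lies outside $S_4$.  So neither $s_\beta$ nor $r$ belongs to $W$ here, and no amount of searching inside $W$ will produce $r$.

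Your fallback idea---decompose each root as $\gamma=\gamma_P+\gamma_{P^\perp}$ and argue that $r(\gamma)=r|_P(\gamma_P)+\gamma_{P^\perp}$ is again proportional to a root---is pointed in the right direction, but as written it is only a hope, not an argument: you have not said why the rotated $P$-component should match some root's $P$-component.  The paper supplies exactly the missing organizing principle.  It partitions $\mathcal{A}_W$ into $\mathcal{A}_1$, the hyperplanes containing $P$ (equivalently, roots lying in $P^\perp$), which are fixed individually by $r$, and $\mathcal{A}_2$, the remaining hyperplanes, each of which meets $P$ in a line.  Projecting $\mathcal{A}_2$ onto $P$ yields a line arrangement that is the arrangement of a dihedral group; since $H_u$ and $H_v$ belong to $\mathcal{A}_2$ and project to two of its lines at angle~$\theta$, the fundamental angle of this dihedral arrangement divides $\theta$, so rotation of $P$ by $\theta$ permutes the projected lines.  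Because every $H\in\mathcal{A}_2$ decomposes as $p(H)\oplus P^\perp$, this immediately gives $r(H)\in\mathcal{A}_2$.  That dihedral structure of the projection is the key step you are missing.
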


\begin{proof}
Let $E_{u, v}$ be the $2$-dimensional subspace $\langle \alpha_u, \alpha_v \rangle$:
\begin{itemize}
\item[$\bullet$] on $E_{u, v}$, the map $r$ is the rotation of angle $\theta = \arccos \frac{\alpha_u \,.\, \alpha_v}{\parallel\alpha_u\parallel \, \parallel\alpha_v\parallel}$,
\item[$\bullet$] on $E_{u, v}^{\perp}$, the map $r$ is the identity map.
\end{itemize}
We have $\mathcal{A}_W = \mathcal{A}_1 \sqcup \mathcal{A}_2$ with $$\mathcal{A}_1 := \{H \in \mathcal{A}\ |\ E_{u, v} \subset H\} \quad \text{and} \quad \mathcal{A}_2 := \{H \in \mathcal{A}\ |\ \dim E_{u, v} \cap H = 1\}.$$
\begin{itemize}
\item[$\circ$] For all $H$ in $\mathcal{A}_1$, we have $r(H) = H$.
\item[$\circ$] Let $p$ be projection on the subspace $E_{u, v}$. The arrangement $p(\mathcal{A}_2)$ is the arrangement of a dihedral group whose angle between two certain hyperplanes is $\theta$. Then, $r\big(p(\mathcal{A}_2) \big) = p(\mathcal{A}_2)$. Hence, for any $H$ in $\mathcal{A}_2$, we have
$$r(H) = r\big(p(H) \oplus E_{u, v}^{\perp}\big) = r\big(p(H)\big) \oplus E_{u, v}^{\perp}$$
which still belongs to $\mathcal{A}_2$.
\end{itemize}
\end{proof}

\noindent We prove Proposition \ref{Pro3} now. Consider two hyperplanes $H_u$ and $H_v$ containing the edge $E$. Let $\alpha_u$ and $\alpha_v$ be the unit vectors of the positive root system associated to $W$ which are orthogonal to $H_u$ and $H_v$ respectively. We use the rotation $r$ of Lemma \ref{LemConv} transforming $\alpha_u$ to $\alpha_v$, and leaving $\mathcal{A}$ invariant. For a given $w$ in $L(E, u)$, we have  
\begin{align*}
\overline{C}_w \cap H_u & = \overline{C}_w \cap E\\
(\overline{C}_w \cap H_u)r & = (\overline{C}_w \cap E)r\\
(\overline{C}_w)r \cap (H_u)r & = (\overline{C}_w)r \cap (E)r\\
(\overline{C}_w)r \cap H_v & = (\overline{C}_w)r \cap E.
\end{align*}
Hence $L(E, u)r = L(E, v)$, and $|L(E, u)| = |L(E, v)|$ which is Proposition \ref{Pro3}.

\section{The Multiplicity of an Edge} \label{Proof}

\noindent We establish a formula for the multiplicity of a relevant edge of $\mathcal{A}_W$ in this section.

\smallskip

\noindent We begin with the proof of Theorem \ref{Th}. From Proposition \ref{Pro1}, we know that the relevant edges are the intersections of hyperplanes $E_{T_J^w}$ with the condition that $J$ is irreducible. The weight of $E_{T_J^w}$ is obviously $\prod_{u \in T_J}q_{u^w}$ so that the real problem concerns $l(E_{T_J^w})$. Fixing a hyperplane $H_t$ containing $E_{T_J^w}$, the set of chambers taken into account to determine $l(E_{T_J^w})$ is $L(E_{T_J^w}, t)$. But Proposition \ref{Pro3} allows us to choose any hyperplane containing $E_{T_J^w}$. Hence the multiplicity of $E_{T_J^w}$ is $$l(E_{T_J^w}) = \frac{1}{2}|L(E_{T_J^w}, t)|.$$

\begin{lemma} \label{LeLw}
Let $J$ be an irreducible subset of $S$, and $w$ an element of $W$. Then, $$L(E_{T_J^w}, t^w) = L(E_{T_J}, t) w.$$
\end{lemma}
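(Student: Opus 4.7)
The plan is to prove the lemma as a clean symmetry statement: right multiplication by $w$ is a combinatorial automorphism of the Coxeter complex that permutes chambers by $C_y \mapsto C_{yw}$ and hyperplanes by $H_t \mapsto H_{t^w}$, and therefore carries $L(E_{T_J},t)$ bijectively onto $L(E_{T_J^w},t^w)$. No case analysis or induction is needed.

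First, I would establish the action of right multiplication on hyperplanes. Starting from Lemma \ref{LeHE}, the face $W_J v$ lies in $H_t$ if and only if $vtv^{-1}\in W_J$. Replacing $v$ by $vw$ and $t$ by $t^w=w^{-1}tw$ gives $(vw)t^w(vw)^{-1}=vtv^{-1}$, so $W_J v\in H_t$ if and only if $W_J vw\in H_{t^w}$. This exactly says that right multiplication by $w$ sends $H_t$ to $H_{t^w}$ as a set of faces. The same rule sends chambers $C_y=\{y\}$ to $C_{yw}$, and is compatible with face-closures since $\overline{W_Jy}\cdot w=\overline{W_Jyw}$.

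Second, I would transfer this to edges and the generated-subspace operator. Since $E_U=\bigcap_{u\in U}H_u$ and the right action is a bijection on faces commuting with intersection, I get $E_{T_J}\cdot w=\bigcap_{u\in T_J}H_{u^w}=E_{T_J^w}$, and similarly $\langle F\cdot w\rangle = \langle F\rangle\cdot w$ for any set of faces $F$. Combining these ingredients with Lemma \ref{LeIn},
\begin{align*}
y\in L(E_{T_J},t)
&\iff \langle\overline{C}_y\cap H_t\rangle=E_{T_J}\\
&\iff \langle(\overline{C}_y\cap H_t)\cdot w\rangle=E_{T_J}\cdot w\\
&\iff \langle\overline{C}_{yw}\cap H_{t^w}\rangle=E_{T_J^w}\\
&\iff yw\in L(E_{T_J^w},t^w),
\end{align*}
which is exactly $L(E_{T_J^w},t^w)=L(E_{T_J},t)\,w$.

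The only genuinely technical point is the first step, the identity $H_t\cdot w=H_{t^w}$; after that the lemma is a one-line consequence. There is no real obstacle beyond being careful with the conjugation convention $t^w=w^{-1}tw$ (so that $(t^w)^{(yw)^{-1}}=t^{y^{-1}}$, which is what makes the labels on the right-hand sides of Lemma \ref{LeIn} match up after the shift).
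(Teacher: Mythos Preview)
Your proof is correct and follows essentially the same approach as the paper's own proof: both use that right multiplication by $w$ is an automorphism of the Coxeter complex sending $C_y\mapsto C_{yw}$, $H_t\mapsto H_{t^w}$, and $E_{T_J}\mapsto E_{T_J^w}$, and then transport the defining equation of $L(E_{T_J},t)$ through this action. The only cosmetic difference is that the paper rewrites the defining condition $\langle\overline{C}_x\cap H_t\rangle=E_{T_J}$ in the equivalent form $\overline{C}_x\cap H_t=\overline{C}_x\cap E_{T_J}$ before applying $w$, whereas you keep the span and explicitly verify $H_t\cdot w=H_{t^w}$ via Lemma~\ref{LeHE}; both routes are immediate.
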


\begin{proof}
Let $x \in L(E_{T_J}, t)$. We have
\begin{align*}
\overline{C}_x \cap H_t & = \overline{C}_x \cap E_{T_J} \\
(\overline{C}_x \cap H_t)w & = (\overline{C}_x \cap E_{T_J})w \\
(\overline{C}_x)w \cap (H_t)w & = (\overline{C}_x)w \cap (E_{T_J})w \\
\overline{C}_{xw} \cap H_{t^w} & = \overline{C}_{xw} \cap E_{T_J^w} 
\end{align*}
Then $L(E_{T_J}, t) w \subseteq L(E_{T_J^w}, t^w)$. With the same argument, we obtain also $L(E_{T_J^w}, t^w) w^{-1} \subseteq L(E_{T_J}, t)$. Hence $L(E_{T_J^w}, t^w) = L(E_{T_J}, t) w$.
\end{proof}

\noindent We see in Lemma \ref{LeLw} that we just need to investigate $L(E_{T_J}, t)$ for each Coxeter class $[J]$. From Proposition \ref{Pro2}, we get 
\begin{align*}
l(E_{T_J}) & = \frac{1}{2}|L(E_{T_J}, t)|\\
& = \frac{1}{2} \Big|\bigsqcup_{K \in [J]} c_{K,J} X(S,J) \Big(\bigsqcup_{u \in \lfloor t \rceil} c_{u,t} N_{W_J}(W_{\{s\}})^v \Big)\Big|\\
& = |\lfloor t \rceil| \cdot |[J]| \cdot |X(S,J)| \cdot |X(J,\{s\})| \quad \text{since} \quad \frac{1}{2}|N_{W_J}(W_{\{s\}})| = |X(J,\{s\})|,
\end{align*}
which finishes the proof of Theorem \ref{Th}.

\section{Computing the Determinants of Finite Coxeter Groups} \label{Full}

\noindent Before computing the Varchenko determinants of the irreducible finite Coxeter groups, we first have to determine their numbers of full support reflections.

\smallskip

\noindent The graph $\Gamma = (S,M)$ associated to a finite Coxeter group $W$ is defined as follows:
\begin{itemize}
\item the vertex set $S$ is the simple reflection set of $W$,
\item the edge set $M$ is composed by the pairs of simple reflections $\{s_i, s_j\}$ such that $m_{ij} \geq 3$, and these edges $\{s_i, s_j\}$ are labeled by $m_{ij}$.
\end{itemize}
We only consider the graphs associated to the irreducible finite Coxeter groups \cite[Table 1.1]{GePf}. The reflections of an irreducible finite Coxeter group $W$ form a single conjugacy class if and only if the edge labels of its associated graph are all odd \cite[n$^{\circ}$1.3, Proposition 3]{Bo}. It is the case of the Coxeter groups $A_{n-1}$, $D_n$, $E_6$, $E_7$, $E_8$, $H_3$, $H_4$, and $I_2(m)$ ($m$ odd).\\
For the case of the Coxeter groups $B_n$, $F_4$, and $I_2(m)$ ($m$ even), removing the even labeled edge from their associated graphs let two graphs of type $A$ left. So their reflections form two conjugacy classes.

\smallskip

\noindent We can see the numbers of reflections of the irreducible finite Coxeter groups in the book of Bj\"orner and Brenti \cite[Appendix A1]{BjBr} for example. Using the Principle of Inclusion and Exclusion, applied to the irreducible maximal parabolic subgroups, and the Pascal’s triangle in the form $\binom{n}{2} - \binom{n-1}{2} = n-1$, we get the number of full support reflections of the irreducible finite Coxeter groups in Table \ref{FuSuRe}.

\begin{table}
\begin{center}
    \begin{tabular}{ p{2.7cm} | p{2.2cm}  p{2.2cm}  p{5cm} }
    \bf{Coxeter Groups} & \bf{Number of Reflections} & \bf{Number of Conjugacy Classes} & \bf{Number of Full Support Reflections} \\ \hline
    $A_{n-1}$ & $\binom{n}{2}$ & $1$ & $1$ \\ 
    $B_n$ & $n^2$ & $2$ & $1\ |\ n-1$ \\ 
    $D_n$ & $n(n-1)$ & $1$ & $n-2$ \\
    $E_6$ & $36$ & $1$ & $7$ \\
    $E_7$ & $63$ & $1$ & $16$ \\
    $E_8$ & $120$ & $1$ & $44$ \\
    $F_4$ & $24$ & $2$ & $5\ |\ 5$ \\
    $H_3$ & $15$ & $1$ & $8$ \\
    $H_4$ & $60$ & $1$ & $42$  \\
    $I_2(m)$ odd/even & $m$ / $m$ & $1$ / $2$ & $m-2$ $\ $ / $\ $ $\frac{m-2}{2}\ |\ \frac{m-2}{2}$ 
    \end{tabular}
\end{center}
\caption{Number of Full Support Reflections of the Irreducible Finite Coxeter Groups.} \label{FuSuRe}
\end{table}

\smallskip

\noindent We are now able to compute the Varchenko determinant of a finite Coxeter group by using Theorem \ref{Th}. The necessary ingredients mentioned to calculate this determinant are exposed in Table \ref{VaDe} for all irreducible finite Coxeter groups. They are obtained with tools of Table \ref{FuSuRe}, those in \cite[Proposition 2.3.8, 2.3.10, 2.3.13]{GePf}, and \cite[Table A.1, A.2]{GePf}.\\
\noindent Let $[\pm n] := \{-n, \dots, -2, -1, 1, 2, \dots, n\}$. We write $\overline{2^{[\pm n]}}$ for the subset of $2^{[\pm n]}$ having the following properties:
\begin{itemize}
\item[$\bullet$] the elements of $\overline{2^{[\pm n]}}$ are the elements $\{i_1, \dots, i_t\}$ of $2^{[\pm n]}$ such that $|i_r| \neq |i_s|$ if $r \neq s$,
\item[$\bullet$] and if $\{i_1, \dots, i_t\} \in \overline{2^{[\pm n]}}$, then $\{-i_1, \dots, -i_t\} \notin \overline{2^{[\pm n]}}$.
\end{itemize}
Using Theorem \ref{Th} and Table \ref{VaDe}, we refind, for example, the determinants 
\begin{align*}
\det \mathcal{A}_{B_n} = & \prod_{\substack{J \in \overline{2^{[\pm n]}} \\ |J| \geq 2}} \Big(1- \prod_{\{i,j\} \in \binom{J}{2}} a_{H_{i,j}}^2\Big)^{2^{n-|J|+1}\,(|J|-2)!\,(n-|J|+1)!} \\
& \prod_{\substack{I \in 2^{[n]} \\ |I| \geq 1}} \Big(1- \prod_{i \in I} a_{H_i}^2
\prod_{\{i,j\} \in \binom{I}{2}} a_{H_{i,j}}^2\, a_{H_{-i,j}}^2\Big)^{2^{n-1}\,(|I|-1)!\,(n-|I|)!},
\end{align*}
computed by Randriamaro \cite{Ra} with combinatorial methods.

\begin{table}
\begin{center}
    \begin{tabular}{ p{1.7cm} | p{2.3cm}  p{1.6cm}  p{1.3cm}  p{3cm}  p{3cm} }
    \bf{Coxeter Groups} & $J$ & $|\lfloor t_J \rceil|$ & $|[J]|$ & $|X(S,J)|$ & $|X(J,\{s_J\})|$ \\ \hline
    $A_{n-1}$ & $A_i$ & $1$ & $n-i$ & $(n-i-1)!$ & $(i-1)!$ \\ \hline
    $B_n$ & $A_i$ $(i \leq n-1)$ & $1$ & $n-i$ & $2^{n-i}(n-i-1)!$ & $(i-1)!$ \\
          & $B_j$ & $1\ |\ j-1$ & $1$ & $2^{n-1}(n-j)!$ & $(j-1)!\ |\ (j-2)!$ \\ \hline
    $D_n$ & $A_i$ $(i \leq n-1)$ & $1$ & $n-i+1$ & $2^{n-i-1}(n-i-1)!$ & $(i-1)!$ \\
          & $D_j$ & $j-2$ & $1$ & $2^{n-j}(n-j)!$ & $2^{j-2}(j-2)!$ \\ \hline
    $E_6$ & $A_1$ & $1$ & $6$ & $720$ & $1$ \\
          & $A_2$ & $1$ & $5$ & $72$ & $1$ \\
          & $A_3$ & $1$ & $5$ & $8$ & $2$ \\    
          & $A_4$ & $1$ & $4$ & $2$ & $6$ \\
          & $D_4$ & $2$ & $1$ & $6$ & $8$ \\        
          & $A_5$ & $1$ & $1$ & $2$ & $24$ \\       
          & $D_5$ & $3$ & $2$ & $1$ & $48$ \\          
          & $E_6$ & $7$ & $1$ & $1$ & $720$ \\ \hline
    $E_7$ & $A_1$ & $1$ & $7$ & $23040$ & $1$ \\
          & $A_2$ & $1$ & $6$ & $1440$ & $1$ \\
          & $A_3$ & $1$ & $6$ & $96$ & $2$ \\    
          & $A_4$ & $1$ & $5$ & $12$ & $6$ \\
          & $D_4$ & $2$ & $1$ & $48$ & $8$ \\        
          & $A_5'$ & $1$ & $1$ & $12$ & $24$ \\ 
          & $A_5''$ & $1$ & $1$ & $4$ & $24$ \\
          & $D_5$ & $3$ & $2$ & $4$ & $48$ \\
          & $A_6$ & $1$ & $1$ & $2$ & $120$ \\      
          & $D_6$ & $4$ & $1$ & $2$ & $384$ \\
          & $E_6$ & $7$ & $1$ & $2$ & $720$ \\         
          & $E_7$ & $16$ & $1$ & $1$ & $23040$ \\ \hline
    $E_8$ & $A_1$ & $1$ & $8$ & $2903040$ & $1$ \\
          & $A_2$ & $1$ & $7$ & $103680$ & $1$ \\
          & $A_3$ & $1$ & $7$ & $3840$ & $2$ \\    
          & $A_4$ & $1$ & $6$ & $240$ & $6$ \\ 
          & $D_4$ & $2$ & $1$ & $1154$ & $8$ \\       
          & $A_5$ & $1$ & $4$ & $24$ & $24$ \\ 
          & $D_5$ & $3$ & $2$ & $48$ & $48$ \\
          & $A_6$ & $1$ & $3$ & $4$ & $120$ \\
          & $D_6$ & $4$ & $1$ & $8$ & $384$ \\
          & $E_6$ & $7$ & $1$ & $12$ & $720$ \\ 
          & $A_7$ & $1$ & $1$ & $2$ & $720$ \\      
          & $D_7$ & $5$ & $1$ & $2$ & $3840$ \\                  
          & $E_7$ & $16$ & $1$ & $2$ & $23040$ \\
          & $E_8$ & $44$ & $1$ & $1$ & $2903040$ 
    \end{tabular}
    \end{center}
\end{table}

\begin{table}
\begin{center}
    \begin{tabular}{ p{1.7cm} | p{2.3cm}  p{1.6cm}  p{1.3cm}  p{3cm}  p{3cm} } \hline
    $F_4$ & $A_1'$ & $1$ & $2$ & $48$ & $1$ \\
          & $A_1''$ & $1$ & $2$ & $48$ & $1$ \\
          & $A_2'$ & $1$ & $1$ & $12$ & $1$ \\
          & $A_2''$ & $1$ & $1$ & $12$ & $1$ \\
          & $B_2$ & $2$ & $1$ & $8$ & $2$ \\
          & $B_3'$ & $1\ |\ 2$ & $1$ & $2$ & $8\ |\ 4$ \\
          & $B_3''$ & $1\ |\ 2$ & $1$ & $2$ & $8\ |\ 4$ \\    
          & $F_4$ & $10$ & $1$ & $1$ & $48$ \\  \hline
    $H_3$ & $A_1$ & $1$ & $3$ & $4$ & $1$ \\
          & $A_2$ & $1$ & $1$ & $2$ & $1$ \\
          & $I_2(5)$ & $3$ & $1$ & $2$ & $1$ \\    
          & $H_3$ & $8$ & $1$ & $1$ & $4$ \\  \hline 
    $H_4$ & $A_1$ & $1$ & $4$ & $120$ & $1$ \\
          & $A_2$ & $1$ & $2$ & $12$ & $1$ \\  
          & $I_2(5)$ & $3$ & $1$ & $20$ & $1$ \\
          & $A_3$ & $1$ & $1$ & $2$ & $2$ \\   
          & $H_3$ & $8$ & $1$ & $2$ & $4$ \\        
          & $H_4$ & $42$ & $1$ & $1$ & $120$ \\  \hline
    $I_2(m)$ & $A_1$ odd/even & $1$ / $2$ & $2$ / $1$ & $1$ & $1$ \\
          & $I_2(m)$ & $m-2$ & $1$ & $1$ & $1$                          
    \end{tabular}
    \end{center}
\caption{Multiplicities of the Coxeter Classes.} \label{VaDe}    
\end{table}

\bibliographystyle{abbrvnat}

\end{document}